\documentclass[12pts]{article}

\usepackage{times}
\usepackage{bm}
\usepackage{natbib}
\usepackage{amsmath,amsfonts,amssymb,amsthm}
\usepackage[figuresright]{rotating}
\usepackage{color}

\usepackage{geometry}
\usepackage{graphicx}

\setlength{\oddsidemargin}{0in}
\setlength{\evensidemargin}{0in}
\setlength{\topmargin}{0in}
\setlength{\headsep}{0.5in}
\setlength{\textwidth}{5.0in}
\setlength{\textheight}{8.5in}


\usepackage{amsfonts}

\newtheorem{prop}{Proposition}[section]

\newcommand{\Ob}{{{\cal O}}}

\newcommand{\Ee}{{\rm E}}

\newcommand{\eps}{\varepsilon}

\newcommand{\dpg}[2]{\frac{\partial #1}{\partial #2}}
\newcommand{\ddpg}[2]{\frac{\partial^2 #1}{\partial #2^2}}

\newcommand{\si}{\sum_{i=1}^n}
\newcommand{\nn}{n^{-1}}

\newcommand{\tkl}{\hat \theta_{\kappa}}

\newcommand{\tki}{\hat \theta_{-i}}

\newcommand{\LCVa}{{\rm LCV}_a}

\newcommand{\pl}{{\rm pL}}

\newcommand{\iid}{i.i.d.}

\def\E{{\mathbb{E}}}

\def\Tr{\mbox{Trace}}

\def\limp{\overset{{p}}{\longrightarrow}}
\def\liml{\overset{{l}}{\longrightarrow}}



\setcounter{footnote}{2}
\title{Inference with penalized likelihood}

\author{Daniel Commenges $^{1,2}$
 \and J\'er\'emie Bureau $^{1,2}$
 \and Hein Putter $^{3}$
 }

\begin{document}





\maketitle
$^1$ INSERM, ISPED, Centre INSERM U-897-Epidemiologie-Biostatistique, Bordeaux,  F-33000\\
$^2$ Univ. Bordeaux, ISPED, Centre INSERM U-897-Epidemiologie-Biostatistique, Bordeaux,  F-33000, France\\
$^3$ Univ. Leiden, The Netherlands\\
\vspace{5mm}

\noindent {\bf ABSTRACT.}\\
This work studies the statistical properties of the maximum penalized likelihood approach in a semi-parametric framework. We recall the penalized likelihood approach for estimating a function and review some asymptotic results. We investigate the properties of two estimators of the variance of maximum penalized likelihood estimators: sandwich estimator and a Bayesian estimator. The coverage rates of confidence intervals based on these estimators are studied through a simulation study of survival data.
In a first simulation the coverage rates for the survival function and the hazard function are evaluated.
In a second simulation data are generated from a proportional hazard model with covariates.
The estimators of the variances of the regression coefficients are studied.
As for the survival and hazard functions, both sandwich and Bayesian estimators exhibit relatively good properties, but the Bayesian estimator seems to be more accurate. As for the regression coefficients, we focussed on the Bayesian estimator and found that it yielded good coverage rates.

\vspace{2mm}

\noindent
{\it Key Words} : Penalized likelihood; crossvalidation; sandwich estimator;  splines; asymptotics; survival data

\section{Introduction}

Penalized likelihood has been widely used for non or semi-parametric estimation of a function. It is also used in parametric models for variable selection. Here we are interested in  its use for estimating a hazard function as proposed by \cite{o1986statistical} and \cite{joly1998penalized}. More generally, this approach have been used for estimating transition intensities in multi-state models \cite{joly2002penalized}. It has the advantage that while making no parametric assumption on the hazard or intensity functions, it yields smooth estimates of these functions. Moreover, complex cases like interval-censored observations can be treated easily, at least conceptually. The main difficulty in the approach is the choice of the smoothing coefficient that can been done by cross-validation. There is also a theoretical difficulty in studying the properties of the maximum penalized likelihood estimators. There are mainly two candidates for estimating the variance of the maximum penalized likelihood estimators, inspired by M-estimators and Bayesian theory respectively. However precise theoretical results are lacking for justifying these estimators of the variance. The aim of this paper is to recall some facts about penalized likelihood and to study the two approaches for estimating the variances by simulation.

 Section 2 recalls the penalized likelihood approach for obtaining smooth non- or semi-parametric estimates of functions, together with the choice of the smoothing coefficient by approximate cross-validation. In section 3 we recall some asymptotic results and we investigate possible estimators of the variance of the penalized likelihood estimators, especially using a Bayesian approach and results from M-estimators theory. The case of survival data is developed in section 4. A simulation study is presented in section 5.  Section 6 concludes.

\section{The penalized likelihood and crossvalidation}\label{penlik}
\subsection{Penalized likelihood}

In the case where at least one of the parameters is a function, penalized likelihood can be applied for obtaining a smooth estimator \citep{o1986statistical}. The penalized loglikelihood is
$$\pl(\theta;\kappa)=L(\theta)-\kappa J(\theta),$$
where $L(\theta)$ is the loglikelihood and  $\kappa J(\theta)$ is a penalty term. For instance if the sample $\bar \Ob_n$ is constituted of $n$ \iid ~variables $Y_i$, the loglikelihood for $g^{\theta}$ is $\si \log g^{\theta}(Y_i)$. We consider the semi-parametric case where  $\theta=(\alpha,\beta)$ where $\alpha$ is a function and $\beta$ is a real parameter. In this case, the penalty term often depends only on the function $\alpha$. The most often used form of the penalty is  $J(\alpha)= \int [\alpha '' (t)]^2 ~dt$. The smoothing parameter $\kappa$ weighs the penalty term. The maximum penalized likelihood estimator (MPLE) is $\tkl$ which maximizes $\pl(\theta; \kappa)$.

The MPLE of the function is most often approximated on a spline basis so that the problem reduces to a parametric one with a large number of parameters. In several cases, a spline is indeed the exact solution \citep{wahba1983bayesian}, so that a finite number of real parameters have to be estimated. Even in that case, the model is still nonparametric because the location of the knots is driven by the observations. In general, the spline is used for approximating the solution. In theory, the number of knots can be arbitrarily large: the larger, the better the approximation of $\tkl$. In the penalized likelihood approach, the splines are used only for approximating the solution of the maximisation of $\pl(\theta;\kappa)$: the degree of smoothness is tuned by the weight $\kappa$ given to the penalty.

\subsection{The approximate crossvalidation criterion}

The choice of the smoothing coefficient $\kappa$ is crucial. It is desirable that this choice be data-driven and the most common approach is cross-validation.
We will focus on likelihood cross-validation which estimates the crossentropy of the estimator \citep{commenges2012universal}.
The leave-one-out cross-validation criterion may be computationally demanding since it is necessary to run the maximization algorithm $n$ times for finding the $\tki, i=1,\ldots,n$ . For this reason an approximate formula is very useful. \cite{Com07} gave the following formula:

\begin{equation}\label{LCVa} \LCVa(\kappa)=-n^{-1}L(\tkl)+\Tr(H^{-1}_{\pl}K),\end{equation}
where $K=\nn \si \hat v_i \hat d_i^T$ with $\hat v_i=\dpg {L_i(\theta)}{\theta}|_{\tkl}$  the individual score taken at the value of the penalized likelihood estimator, and $\hat d_i=\frac{1}{n-1}(\hat v_i+\kappa \dpg{J(\theta)}{\theta}|_{\tkl})$, and where  $H_{\pl}= -\ddpg {L(\theta)}{\theta}_{|\tkl}+\kappa \ddpg{J(\theta)}{\theta}_{|\tkl}$. The correction term is often interpreted as the equivalent number of parameters \citep{Com07}.

\section{Asymptotic distribution of the penalized likelihood estimators}\label{asympt-cond}
\subsection{Consistency}

\cite{cox1990asymptotic,cox1996penalized} present consistency results for MPLE in a non parametric framework. The problem is formalized using a normalized penalized loglikelihood. We define $\bar{pL_n} =  \bar L_n(\theta) - \lambda_n J(\theta)$ where $\bar L_n = \nn L(\theta)$ and $\lambda_n=\nn \kappa_n$.
For consistency of $\hat \theta_n$, the sequence $\lambda_n$ must tend toward zero in a certain range of speed. We note $\lambda_n=O(n^{-\alpha})$, and consider $0\le \alpha \le 1$. If $\lambda_n$ tends to 0 when $n\rightarrow\infty$ the bias of $\hat \theta_n$ goes to 0. The variance of $\hat \theta_n$ tends to zero whatever $\alpha$, $0\le \alpha \le 1$. Consistency obtains whatever $\alpha >0$ \citep{yu2002penalized}.

In the parametric case we can deduce the consistency of the MPLE from that of the MLE in the case of $\lambda_n = o(n)$.

\begin{prop}
Let $\Theta$ be a compact parameter space. We assume that the Maximum Likelihood Estimator (MLE) $\hat \theta^{ML}$ satisfies the conditions of the non penalized maximum likelihood estimator of the theorem 5.7 from \citep{van2000asymptotic} : there exists a fixed function which not depend on $n$ $L(\theta;x)=\E[L_n(\theta;x)]$, and a value $\theta_0 \in \Theta$ such that, $\forall \eps >0$,
\begin{equation}
\sup_{\theta\in \Theta} | \bar L_n(\theta;x) - L(\theta) | \limp 0 \quad \text{(uniform convergence)}
\end{equation}
\begin{equation}
\sup_{\theta : \| \theta - \theta_0 \| \leq \eps} L(\theta) < L(\theta_0)
\end{equation}
Then, if $\lambda_n = o(n)$, $\hat \theta^{pL}$ converges in probability to $\theta_0$.
\end{prop}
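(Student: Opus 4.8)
The plan is to recognize the maximum penalized likelihood estimator as an ordinary M-estimator for the perturbed criterion $M_n(\theta) := \bar L_n(\theta) - \lambda_n J(\theta)$ and to apply Theorem 5.7 of \citep{van2000asymptotic} to $M_n$ directly. Since maximizing the penalized loglikelihood $pL(\theta;\kappa_n)=L(\theta)-\kappa_n J(\theta)$ is the same as maximizing its $n^{-1}$ multiple $M_n(\theta)$, we have $\hat\theta^{pL}=\argmax_{\theta\in\Theta} M_n(\theta)$, so $\hat\theta^{pL}$ exactly maximizes $M_n$ and the ``near-maximizer'' hypothesis of the cited theorem holds trivially. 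The theorem then delivers $\hat\theta^{pL}\limp\theta_0$ as soon as $M_n$ converges uniformly in probability to a fixed limit possessing a well-separated maximum at $\theta_0$. The entire argument thus reduces to showing that the penalty does not disturb the uniform limit already guaranteed for the unpenalized criterion, i.e.\ that $M_n$ converges uniformly to the \emph{same} $L$.

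First I would write the elementary bound
\begin{equation}
\sup_{\theta\in\Theta}\left| M_n(\theta)-L(\theta)\right| \le \sup_{\theta\in\Theta}\left|\bar L_n(\theta)-L(\theta)\right| + \lambda_n\sup_{\theta\in\Theta}\left|J(\theta)\right|.
\end{equation}
The first term on the right converges to zero in probability by the uniform-convergence hypothesis inherited from the MLE. For the second term I would exploit the compactness of $\Theta$: as $J$ is continuous on the compact set $\Theta$, it is bounded there, say $\sup_{\theta\in\Theta}|J(\theta)|=M<\infty$, so the penalty contributes at most $\lambda_n M$. This is precisely where the smoothing-rate hypothesis is used: it forces the normalized weight $\lambda_n=n^{-1}\kappa_n$ to tend to zero, so that $\lambda_n M\to 0$ and the penalty washes out uniformly in $\theta$. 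Adding the two pieces gives $\sup_\theta|M_n(\theta)-L(\theta)|\limp 0$.

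With this uniform convergence in hand, the separation condition $\sup_{\theta:\|\theta-\theta_0\|\ge\eps}L(\theta)<L(\theta_0)$ for the limit $L$ is exactly the second hypothesis supplied, so Theorem 5.7 of \citep{van2000asymptotic} applies to $M_n$ and yields $\hat\theta^{pL}\limp\theta_0$. I expect the only genuine obstacle to be the uniform control of the penalty term in the second paragraph: one needs $J$ bounded on $\Theta$ so that a single scalar rate on $\lambda_n$ suffices to annihilate $\lambda_n J(\theta)$ simultaneously for all $\theta$. Compactness of $\Theta$, together with continuity of $J$, is what guarantees this boundedness; without it a merely pointwise decay of the penalty would not control the supremum, and the clean reduction to the unpenalized consistency theorem would break down.
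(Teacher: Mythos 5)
Your proposal is correct and follows essentially the same route as the paper: the identical triangle-inequality decomposition $\sup_\theta|M_n(\theta)-L(\theta)|\le\sup_\theta|\bar L_n(\theta)-L(\theta)|+\lambda_n\sup_\theta|J(\theta)|$, followed by an appeal to Theorem 5.7 of van der Vaart with the same uniform-convergence and well-separation hypotheses. The one point where you go slightly beyond the paper is in justifying $\lambda_n\sup_\theta|J(\theta)|\to 0$ via boundedness of $J$ (continuity on the compact $\Theta$), a step the paper simply asserts; note also that you correctly read the separation condition as taken over $\{\theta:\|\theta-\theta_0\|\ge\eps\}$, quietly fixing a sign typo in the paper's statement.
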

\begin{proof}
Let $\theta\in \Theta$, the parameter space defined as a compact.
\begin{equation}
\sup_{\theta \in \Theta}| \bar{pL_n} (\theta) - L (\theta)  | \leq \sup_{\theta \in \Theta} |\lambda_n J(\theta)  | + \sup_{\theta \in \Theta} | \bar L_n (\theta) - L (\theta) |
\end{equation}
if $\lambda_n = o(n)$ then,
\begin{equation}
\sup_{\theta \in \Theta} |\lambda_n J(\theta)  | \longrightarrow 0  \quad \text{as} \; n \rightarrow \infty.
\end{equation}
And by the classical non penalized MLE properties,
\begin{equation}
\sup_{\theta\in \Theta} | \bar L_n(\theta;x) - L(\theta) | \limp 0 \quad \text{as} \; n \rightarrow \infty
\end{equation}
The function $L(\theta)$ satisfies (MLE properties) the root unicity condition : let $\theta_0 \in \Theta$ such that
\begin{equation}
\sup_{\theta : \| \theta - \theta_0 \| \leq \eps} L(\theta) < L(\theta_0)
\end{equation}
\\
According to the theorem 5.7 of \citep{van2000asymptotic}, $\hat \theta^{pL} \limp \theta_0$ as $n\rightarrow \infty$.
\end{proof}

The variance increases while the bias decreases  with $\alpha$. There is an optimal rate for the MSE which depends on the degree of differentiability of the unknown function. It is $\nn$ if the degree of differentiability of the unknown function is infinite.
As we shall see in section \ref{Bayesian} there is a link between penalized likelihood and the Bayesian approach. The penalized log-likelihood can be interpreted as the logarithm of the numerator of the posterior distribution and the term $\kappa J(\theta)$ is then up to an additive constant the log of the prior. With this interpretation, since the prior is fixed, $\kappa$ does not depend on $n$, and thus $\lambda_n=O(\nn)$.

In spite of their theoretical interest, these results are not very useful in practice where the problem is to find the optimal value for a given $n$. The most often used method for doing this is crossvalidation. There are results in density estimation about the optimality of the crossvalidation choice: the estimator using the value of $\lambda_n$ chosen by crossvalidation has asymptotically the same rate of convergence than using the optimal value of $\lambda_n$ \citep{hall1992smoothed}.

One may wonder how the sequence of $\lambda_n$ chosen by crossvalidation behaves. We performed a simulation of survival data following a Weibull distribution and with samples of different sizes going from $100$ to $2000$ observations. The smoothing parameters $\kappa_n$ were estimated with the approximate cross validation criterion $LCV_a$. Figure \ref{kappa} shows the behavior of the sequence $\kappa_n$, $n^{-1/2}\kappa_n$ and $\lambda_n=\nn \kappa_n$ as a function of $n$. Apparently the sequence $\lambda_n$ goes to 0 as the number of observations grows and the rate seems to slightly slower than  $\nn$, the optimal rate for infinitely differentiable functions.
\begin{figure}[!h]
\centering
\includegraphics[scale=0.8]{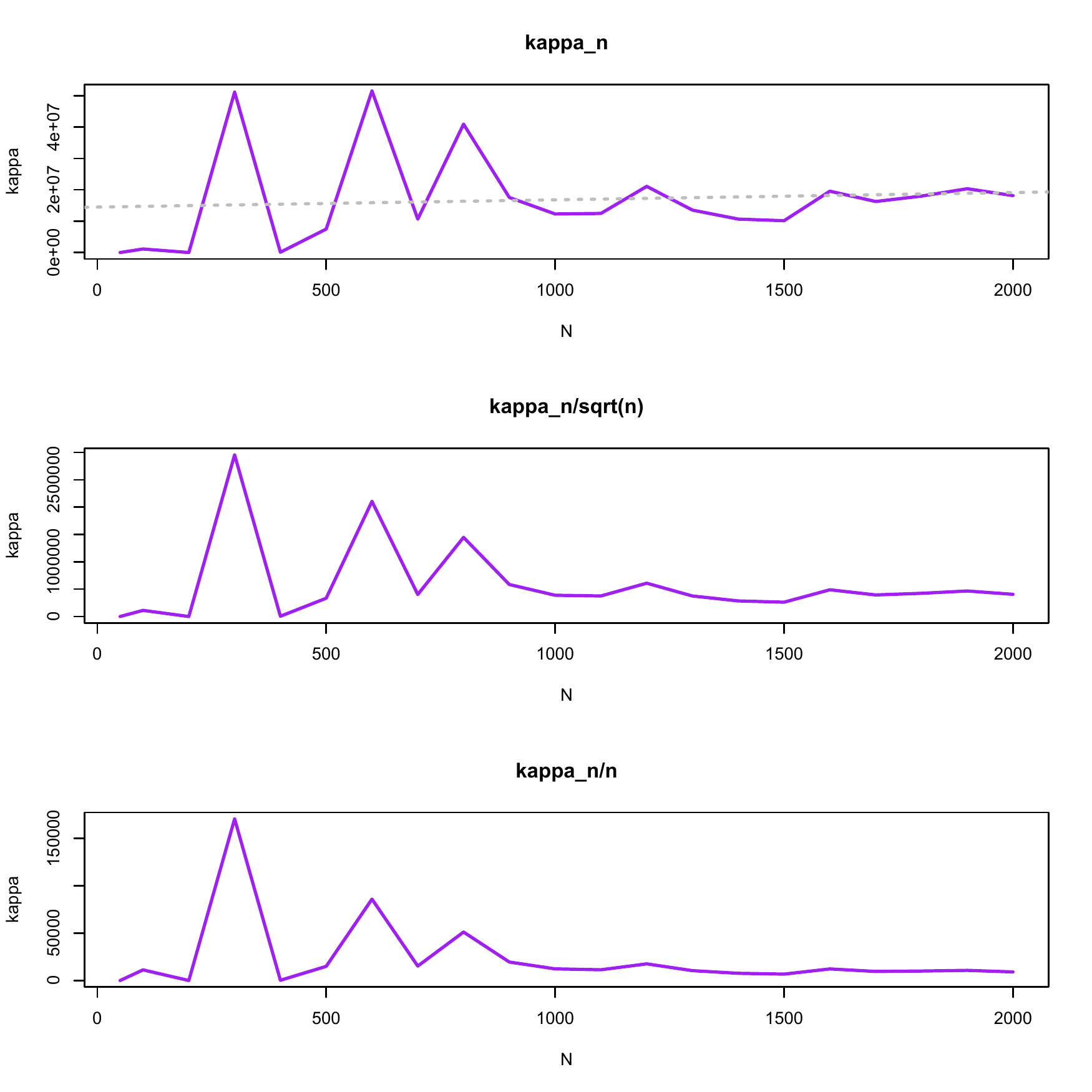}
\caption{Sequences $\kappa_n$, $n^{-1/2}\kappa_n$ and $\lambda_n$. The sequence of smoothing parameter $\kappa_n$ is computed using the $LCV_a$ crossvalidation criterion.}
\label{kappa}
\end{figure}

\subsection{Penalized likelihood as a posterior distribution}\label{Bayesian}

\cite{wahba1983bayesian} has made the link between penalized likelihood and the Bayesian approach. We must consider probability measures on a functional space. In this approach $\theta$ is considered as a random variable and the penalized loglikelihood appears as the numerator of the posterior density, where $\kappa J(\theta)$ is the log-prior density (up to an additive constant). Either $\kappa$ is known, or it is an hyper-parameter. It can be estimated from the data in an empirical Bayes spirit. Anyway, as a parameter it should not depend on $n$. Since $\lambda_n=\nn \kappa_n$, this choice is compatible with the optimal order of the sequence $\lambda_n$ in $\nn$ obtained by \cite{cox1996penalized} in the case of infinite differentiability.

\cite{o1988nonparametric} proposed an estimator based on a Bayesian view of the penalized likelihood. This leads to approximate the distribution of $\hat \theta$ by a Gaussian distribution  with  variance $V^{\text{Bayes}}_n(\hat \theta) = H^{-1}_{pL_n}(\hat \theta)$, where $H_{pL_n}(\hat \theta) = - \frac{\partial^2}{\partial\theta^2}pL_n(\hat \theta)$, the Hessian of minus the logarithm of the penalized likelihood. This estimator will be called the Bayesian estimator.

\subsection{Penalized likelihood estimators as M-estimators}

Since Huber's papers in the 1960's, M-estimation methods became important for asymptotic analysis and approximate inference. \cite{huber1964robust} introduced M-estimators and their asymptotic properties and they were an important part for the development of robust statistics. M-estimators were used to sudy the properties of maximum likelihood estimators in misspecified models. \cite{zeger1986longitudinal} used  M-estimator theory to justify the use of generalized estimating equations in the case of correlated observations.

Assuming $Y_i,\cdots,Y_n$ are independent and identically distributed, we can consider that the MPLE is a M-estimator \citep{van2000asymptotic}, that is an estimator which satisfies

\begin{equation}
\nn \si \phi(\theta,Y_i) = 0\quad \text{, with} \quad \phi(\theta,Y_i)=  - \dpg {L_i(\theta)}{\theta}  + \kappa \dpg{J(\theta)}{\theta}.
\end{equation}
The asymptotic distribution of $\hat\theta$ is the multivariate normal distribution ${\cal N}(\theta_0, V(\theta_0)$), where $\theta_0$ is the parameter value defined by $\Ee[\phi(\theta_0,Y_1)] = 0$ and where $V$ can be estimated by the (penalized) sandwich estimator :
\begin{equation}
\hat V(\theta_0) = H_{\pl}^{-1}(\theta_0) \bigg[\sum U_iU_i^T\bigg]H_{\pl}^{-1}(\theta_0),
\end{equation}
where the individual penalized score function are given by $U_i=(n-1)\hat d_i$, $\hat d_i=\frac{1}{n-1}(\hat v_i+\kappa \dpg{J(\theta)}{\theta}|_{\theta_0})$ and the non penalized score function $\hat v_i=\dpg {L_i(\theta)}{\theta}|_{\theta_0}$.

We also define the non-penalized sandwich estimator obtained by putting $\kappa=0$ in the formula. In that case, the non-penalized scores and Hessian are used and the formula is similar to the robust variance estimator of the variance of the MLE, except that the parameter value is taken at the maximum of the penalized likelihood (rather than the maximum of the likelihood).

M-estimation theory needs that the function $\phi$ does not depend on $i$ and $n$. \cite{chen2009asymptotic} used a M-estimator for functional estimation with a function depending on $n$ through a smoothing coefficient represented by a window width in a kernel method. \cite{cheng2010bootstrap} also derive results in which the $\phi(\theta,Y_i)$ may depend on $n$ and proves the consistency of a distribution obtained by bootstrap. Indeed bootstrap is another way of doing inference; however it is not feasible in complex problems, especially those in which the loglikelihood involves numerical integrals (as is the case for interval-censored observations). Actually, \cite{stefanski2002calculus} gave an extension of the M-estimators theory in which $\theta$ satisfies $\si \phi(\theta,Y_i) = s_n$, where $s_n/\sqrt{n}\limp 0$ as $n\rightarrow\infty$. In this particular case, the asymptotic property of $\hat\theta$ holds. This extension allows to cover a wider class of statistics whose $\phi$ function depends on $n$.

\section{Penalized likelihood inference for survival analysis}

We will focus on a proportional hazard model for survival data. We denote the independent survival times by $T_1,\cdots,T_n$. For $i=1,\ldots, n$ we observe $\tilde T_i=\min(T_i,C_i)$, where $C_i$ is a censoring variable. We use $i\in\mathcal{O}$ to denote the indexes of uncensored survival times. The proportional hazard model is given by the following expression for the hazard function:
\begin{equation}
h_i(t) = h_0 (t)\exp(X_i \beta)
\end{equation}
The aim of the study is to estimate the baseline hazard function $h_0(t)$ and the $p$ parameters vector $\beta$ of the covariates $X_i$.

The full likelihood function is then given by :

\begin{equation}
L(\beta,h_0) = - \sum_{i=1}^{n} H_0(\tilde T_i)\exp(X_i\beta) + \sum_{i\in\mathcal{O}}(\log h_0(\tilde T_i) + X_i\beta)
\end{equation}
where $H_0(t) = \int_{0}^{t} h_0(u)du$ is the cumulative baseline hazard function. We want to estimate the couple $(\beta,h_0)$ by using a penalized likelihood approach.

The penalized likelihood function is defined by
\begin{equation}
pL (\beta,h_0) = L(\beta,h_0) -  \kappa J(h_0) \quad, \;\kappa \geq 0.
\end{equation}
where $J(h_0)$ is the penalty function defined by $J(h_0) = \int h_0 ''(u)^2 du$.

Maximizing this penalized likelihood is a complex functional optimization problem. It is possible to approximate the solution on a spline basis \citep{joly1998penalized}:

\begin{equation}
\tilde h_0(t) = \sum_{k=1}^{m}\theta_k\psi_k (t)
\end{equation}
where $\psi_k (t)$ is the associated spline basis $(\psi_1,\cdots,\psi_m)$ of this finite dimensional space.

The likelihood expression of $L(\beta,h_0)$ then becomes
\begin{equation}
L(\beta,\theta) = -\sum_{k=1}^{m}\theta_k \sum_{i=1}^{n}\exp(X_i\beta)\Psi_k(\tilde T_i) + \sum_{i\in\mathcal{O}}(\log h_0(\tilde T_i) + X_i\beta)
\end{equation}
where the cumulative hazard function $H_0(t)$ is approximated with a proper spline basis $\Psi_k(t)\,,\,k=1,\cdots,m$;  such that $\Psi_k(t) = \int_0^t \psi_k(u)du$. Then, the cumulative baseline hazard function is approximated by
\begin{equation}
\tilde H_0(t) = \sum_{k=1}^{m}\theta_k\Psi_k (t)
\end{equation}
The penalty function becomes a function of the parameters vector $\bar\theta = (\theta_1,\cdots,\theta_m)^\top$ and can be expressed in a matrix form as (\cite{wahba1983bayesian})
\begin{equation}
J(\theta) = \bar\theta\,^\top \Omega \, \bar\theta
\end{equation}
where $\Omega$ is a symmetric $m\times m$ definite positive matrix with elements
\begin{equation}
\omega_{kr} = \int \psi_k ''(t)\psi_r ''(t)dt
\end{equation}
We are interested in the asymptotic properties of the vector $\hat\xi = (\hat\beta,\hat\theta)^\top$. Then, we deduce the asymptotic behavior of $(\tilde h_0(t) - h_0(t))$ and of $(\tilde S(t) - S(t))$ in order to get confidence intervals for the baseline hazard function and the survival function respectively.

Remark : why consider  $(\tilde h_0(t) - h_0(t))$ instead of  $(\widehat h_0(t) - h_0(t))$? The error $(\tilde h_0(t) - \widehat h_0(t))$ depends directly on the number of knots used in the spline approximation. Theoretically, one can set a sufficiently large number of knots in order to minimize the error.

\subsection{Variance estimation for the survival and hazard functions}
\noindent
The survival and hazard functions are approximated by linear combinations of cubic splines basis. The basis $\psi_k(t)$ and $\Psi_k(t)\,,\,k=1,\cdots,m$ then becomes a M-spline basis and I-spline basis respectively (\cite{rondeau2003maximum}) :
\begin{equation}
\label{survie}
\tilde S(t) = \exp\Big( - H(t) \Big) = \exp \Big( -\sum_{i=1}^{m} \theta_i I_i(t) \Big)
\end{equation}
\begin{equation}
\label{risque}
\tilde h(t) =\sum_{i=1}^{m} \theta_i M_i(t)
\end{equation}

For the hazard function, the variance is directly obtained from the expression (\ref{risque}) as a linear combination of the gaussian vector $\bar\theta = (\theta_1,\cdots,\theta_m)^\top$ :
\begin{equation}
\text{var}(\tilde h(t)) = \bar M(t)^\top  \text{var} (\bar\theta) \bar M(t)
\end{equation}
where $\bar M(t) = (M_1(t),\cdots,M_m(t))^\top$ is the M-spline basis used in the hazard function approximation.

Then we have the following asymptotic normal distribution as $n\rightarrow\infty$ :
\begin{equation}
\sqrt{n} \big(\tilde h(t,\bar\theta) - h(t,\bar\theta)\big) \liml N\Big(0,\bar M(t)^\top  \text{var} (\bar\theta) \bar M(t)\Big)
\end{equation}
The survival function case leads us to use the multivariate Delta Method (Appendix A) as the survival function is expressed by
\begin{equation}
\label{survie}
\tilde S(t) = \exp\Big( - H(t) \Big) = \exp \Big( -\sum_{i=1}^{m} \theta_i I_i(t) \Big)
\end{equation}
If we have the following asymptotic properties for the parameters vector $\bar\theta$ :

$ \bar\theta \limp \bar\theta_0$ and $\sqrt{n} (\bar\theta - \bar\theta_0) \liml N(0,\text{var}(\bar\theta))$ as n $\rightarrow\infty $, then, the Delta Method gives
\begin{equation}
\sqrt{n} \big(\tilde S(t,\bar\theta) -S(t,\bar\theta_0)\big) \liml N\Big(0,\nabla S(t,\bar\theta_0)^\top \text{var} (\bar\theta)\nabla S(t,\bar\theta_0)\Big)
\end{equation}
where
\begin{eqnarray*}
\nabla S(t,\bar\theta_0) &=& \Big(  \frac{\partial}{\partial \theta^{(1)}}S(t,\theta_0^{(1)}) , \cdots, \frac{\partial}{\partial \theta^{(m)}}S(t,\theta_0^{(m)}) \Big)
\\\\
&=& \Big( -I_1(t)\exp(-\sum_{i=1}^{m}\theta_i I_i(t)),\cdots, -I_m(t)\exp(-\sum_{i=1}^{m}\theta_i I_i(t)) \Big)
\end{eqnarray*}
The confidence interval of level $1-\alpha$ for these two functions are :
\begin{eqnarray*}
IC_{1-\alpha}\big( S(t,\bar\theta_0) \big) &=&  \tilde S(t,\bar\theta) \pm \Phi(1-\frac{\alpha}{2}) \bigg(\frac{\nabla S(t,\bar\theta_0)^\top \text{var} (\bar\theta)\nabla S(t,\bar\theta_0)}{n}   \bigg)^{1/2}
\\\\
IC_{1-\alpha}\big( h_0(t,\bar\theta_0) \big) &=&  \tilde h_0(t,\bar\theta) \pm \Phi(1-\frac{\alpha}{2}) \bigg(\frac{\bar M(t)^\top  \text{var} (\bar\theta) \bar M(t)}{n}   \bigg)^{1/2}
\end{eqnarray*}
where $\Phi$ is the quantile function of the standard normal distribution.

\section{Simulations}
We performed a simulation study to compare the statistical properties in terms of coverage rates of the different variance estimators. Two cases were considered: without covariate in a simple survival analysis and with covariates through a proportional hazard model.

\subsection{Inference for hazard and survival functions}\label{simulation1}

\subsubsection{Design}
Here we are interested in the coverage rates of survival or hazard functions estimators. The methodology is to generate a sample of observations following a known distribution, then compute the associated theoretical survival or hazard function and see if the expected (known) function belongs to the estimated confidence interval of the estimated function. The coverage rates have been computed as a mean coverage rates of $100$ confidence intervals taken at equally-spaced times between the smallest and highest observed times of event. Three variance estimators are compared: the Bayesian estimator, the non-penalized sandwich estimator, and the penalized sandwich estimator.

We have generated a right-censored data sample from the Weibull distribution W(a,b). We denote $a$ the shape parameter and $b$ the scale parameter. The density function of a Weibull distribution is given by :
\begin{equation}
f(x) = \frac{a}{b}\big(\frac{x}{b}\big)^{a-1}\exp\big\{ -\big(\frac{x}{b}\big)^a\big\} \quad, x>0
\end{equation}
The parameter values have been set at $a = 13$ and $b = 100$ in order to be close to real survival times. All observations are between 55 and 105. The censoring proportion was set at $20\%$ and uniformly distributed on the sample. The hazard function was approximated using cubic splines and the number of knots was set at $7$. The smoothing coefficient was chosen by the approximated cross-valdation criterion LCV$_a$. The coverage rates were computed as an average over $1000$ replicas. In order to see how each method is affected by sample size, the estimations were realized for three different sample sizes : $100$, $500$ and $1000$.

\subsubsection{Results}
Table \ref{tablesurv} (resp. Table \ref{tablehazard} ) show the mean coverage rates obtained with the three variance estimators for the survival function (resp. the hazard function). It appears that the Bayesian estimator gives good coverage rates whatever the sample size for both survival and  hazard functions. The non-penalized sandwich estimator seems not to work correctly. The penalized sandwich estimator works  correctly and seems to be more accurate for the hazard function than for the  survival function.

\begin{table}[htdp]
\begin{center}
\begin{tabular}{|c|c|c|c|}
\hline  Observations &  Bayes & Non Penalized Sandwich & Penalized Sandwich \\
\hline
n=100   &  $92\%$ & $98\%$ &  $93\%$ \\

\hline
n=500   & $93\%$ & $95\%$ &  $87\%$ \\

\hline
n=1000   &  $95\%$ & $93\%$ &  $89\%$ \\

\hline	
\end{tabular}
\end{center}
\caption{Coverage rate of confidence intervals for the survival function based on three estimators of the variance: Bayesian estimator, non-penalized and penalized sandwich estimators.}
\label{tablesurv}
\end{table}

\begin{table}[htdp]
\begin{center}
\begin{tabular}{|c|c|c|c|}
\hline  Observations &  Bayes & Non Penalized Sandwich & Penalized Sandwich \\
\hline
N=100   & $92\%$ & $58\%$ &  $98\%$ \\

\hline
N=500   &   $93\%$ & $51\%$ &  $97\%$ \\

\hline
N=1000   &  $93\%$ & $50\%$ &  $97\%$ \\

\hline	
\end{tabular}
\end{center}
\caption{Coverage rate of confidence intervals for the hazard function}
\label{tablehazard}
\end{table}

\subsection{Inference for regression coefficient in a proportional hazard model}

\subsubsection{Design}
The second simulation is a study of a proportional hazard model with covariates. Survival times have been generated from a Weibull distribution as described below (\cite{bender2005generating}). The proportional hazard model is given by
\begin{equation}
h_i(t) = h_0 (t)\exp(X_i \beta) \quad , \quad i = 1,\cdots, n.
\end{equation}
where $X$ is the vector of the $p$ covariates, $\beta$ the vector of the regression coefficients and $h_0 (t)$ is the baseline hazard function. Let $U$ be a random variable with the uniform distribution $U([0,1])$. The survival times $T$ of the proportional hazard model, following a Weibull distribution with parameters $\lambda$ (scale) and $k$ (shape) can be expressed as
\begin{equation}
T = \frac{1}{\lambda} [-\log(U)\exp(X_i \beta_i)]^{1/k}
\end{equation}
The simulations were made with one and two independent covariates. The regression coefficients have been set to $\beta_1=1$, $\beta_2=-1$. The number of survival times was $n=3000$. The Weibull distribution parameters were W(12,$1/0.01$). As for the first simulation of section section \ref{simulation1}, the parameter values of the Weibull distribution have been set so that the resulting distribution is close to that of real survival times. All observations of events were between $55$ and $105$. The values of covariates $X_1$ and $X_2$ were generated from uniform distributions: $X_1\sim U([0,1])$, $X_2\sim U([0,3])$. The censoring level was set at $20\%$. We focused on the Bayesian estimator. We generated $1000$ replicas of these experiments.

\subsubsection{Results}
Table \ref{tablebeta1} shows results with one covariate. Table \ref{tablebeta2} shows results for two covariates. In both cases we see that the bias of the penalized likelihood estimator is very small and that the coverage rates based on the Bayesian estimator of the variance are satisfactory.

\begin{table}[htdp]
\begin{center}
\begin{tabular}{|c|c|c|c|c|}
\hline  $\beta_i$ & $\widehat\beta_i$ & $\widehat\sigma_{Bayes}$ ($\widehat\beta_i$) & CI width (Bayes) & Rate (Bayes) \\
\hline
$\beta_1 = 1$ & 1.00036 & 0.05 & 0.20 &  $95\%$  \\
\hline	
\end{tabular}
\end{center}
\caption{Inference with the the Bayesian estimator for regression coefficients in a proportional hazard model: 1000 simulations with 2 covariates.}
\label{tablebeta1}
\end{table}%

\begin{table}[htdp]
\begin{center}
\begin{tabular}{|c|c|c|c|c|}
\hline  $\beta_i$ & $\widehat\beta_i$ & $\widehat\sigma_{Bayes}$ ($\widehat\beta_i$)  & CI width (Bayes) & Rate (Bayes) \\
\hline
$\beta_1 = 1$ & 1.003 & 0.06 &  0.24 &  $96\%$  \\
\hline	
$\beta_2 = -1$ & -0.969 & 0.08 &  0.34 & $91\%$ \\
\hline
\end{tabular}
\end{center}
\caption{Inference with the the Bayesian estimator for regression coefficients in a proportional hazard model: 1000 simulations with 2 covariates.}
\label{tablebeta2}
\end{table}

\section{Conclusion}

We have recalled facts about the penalized likelihood approach in its use for estimating an unknown function. We have focussed on the case of survival data. We have compared by simulation two possible estimators of the variance of the parameters, one based on the M-estimator theory, the other based on a Bayesian interpretation of penalized likelihood. We have studied coverage rates of confidence intervals for the hazard and the survival function. Both approaches work correctly, although the Bayesian estimator looks more accurate. For inference for regression coefficients we have focussed on the Bayesian estimator and found very good coverage property of the confidence intervals based on it.

\section{Appendix A : Multivariate Delta Method}
\noindent
Let $B$ be the estimate of a real parameter $\beta$. We assume that we have the following asymptotic properties for the estimator $B$, as $n \rightarrow \infty$ :
\begin{eqnarray*}
&B&\limp\beta \quad (\text{consitency}) \\
&\sqrt{n}& (B - \beta) \liml N(0,\Sigma) \quad (\text{asymptotic normality})
\end{eqnarray*}
where $\Sigma$ is a given covariance matrix.

We have the following approximation by using a first order Taylor expansion
\begin{equation}
g(B) \sim h(B) + \nabla h(B)^\top (B-\beta) + o(B)
\end{equation}
Then,
\begin{eqnarray*}
\text{var}(g(B)) &\sim& \text{var}(h(B) + \nabla h(B)^\top (B-\beta)) \\
			&=& \text{var}(h(B) + \nabla h(B)^\top B - \nabla h(B)^\top\beta)) \\
			&=& \text{var}(\nabla h(B)^\top B) \\
			&=& \nabla h(B)^\top \;\;\text{var}(B)\;\; \nabla h(B) \\
			&=&  \nabla h(B)^\top \;\; \frac{\Sigma}{n} \;\; \nabla h(B)
\end{eqnarray*}
Thus we have the following asymptotic normality
\begin{equation}
\sqrt{n} (g(B) - g(\beta)) \limp N\Big(0,\nabla h(B)^\top \;\;  \Sigma \;\; \nabla h(B)\Big)
\end{equation}

\bibliographystyle{chicago}

\bibliography{infpen}

\vspace{15mm}
\label{lastpage}
\end{document}